\newtheorem{thm}{Theorem}[section]
\newtheorem{cor}[thm]{Corollary}
\newtheorem{prop}[thm]{Proposition}
\theoremstyle{definition}
\newtheorem{defin}[thm]{Definition}
\newtheorem{rem}[thm]{Remark}
\newtheorem{exa}[thm]{Example}
\numberwithin{equation}{section}
\begin{document}
\baselineskip=15pt

%\doublespacing

\title[Existence of Gradings on Associative Algebras]{Existence of Gradings on Associative Algebras}

\author{Dusko Bogdanic}

\email{dusko.bogdanic@gmail.com}
\date{}

\begin{abstract} 

In this paper we study the existence of gradings on finite dimensional associative algebras. We prove that a connected algebra $A$ does not have a non-trivial grading if and only if  $A$ is basic,  its quiver has one vertex, and its group of outer automorphisms  is unipotent. We apply this result to prove that up to graded Morita equivalence there do not exist non-trivial gradings  on the blocks of group algebras with quaternion defect groups and one isomorphism class of simple modules.
\end{abstract}

\subjclass[2010]{Primary 16A03; Secondary 16W50, 20C20}

\keywords{Graded algebras, graded Morita equivalence, quaternion defect groups, unipotent morphisms.}

\maketitle

\section{Introduction and preliminaries}

Let $A$ be an algebra over a field $k$. We say that  $A$ is a graded algebra if $A$ is  the direct sum of subspaces $A=\bigoplus_{i\in\mathbb{Z}}
A_i$, such that $A_iA_j\subset A_{i+j}$, $i,j\in \mathbb{Z}$. The subspace $A_i$ is said to be the homogeneous subspace of degree $i$. It is obvious that we can always trivially grade $A$ by setting $A_0=A$. In this paper we study the problem of existence of non-trivial gradings on a given algebra. For some algebras it is not too difficult to construct a non-trivial grading, e.g.\  a polynomial algebra $k[x]$ can be graded by declaring $1$ to be a homogeneous element of degree $0$, and $x$ to be a homogeneous element of degree $d$, where $d\in \mathbb{Z}$ is arbitrary. For some other algebras, such as group algebras, it is not obvious how one can construct non-trivial gradings on these algebras. More complex methods, such as transfer of gradings via derived and stable equivalences (\cite{Rou, GBTA, BDeh}), had to be developed to introduce non-trivial gradings on certain blocks of group algebras. In this paper we focus our attention on the following question. How do we prove that a given algebra does not possess a non-trivial grading? If there is not an obvious generating set of $A$ that would consist of homogeneous elements, as in the case of $k[x]$ (where $\{1,x\}$ was a generating set of homogeneous elements), how do we prove that none of the generating sets (or bases) could serve as a homogeneous generating set (or basis)?  In the next section of this paper we prove that a connected finite dimensional algebra $A$ can not be non-trivially graded if and only if it is basic,  its quiver has one vertex, and its group of outer automorphisms ${\rm Out}(A)$ is unipotent. In the last section, we apply this result on the blocks of group algebras with quaternion defect groups and one isomorphism class of simple modules, and prove that they can only be trivially graded, up to graded Morita equivalence.

If $A$ is a graded algebra, then an $A$-module $M$ is graded if it is the direct sum of subspaces
$M=\bigoplus_{i\in\mathbb{Z}} M_i$,  such that  $A_iM_j\subset
M_{i+j}$, for all $i,j\in \mathbb{Z}$. If $M$ is a graded
$A$--module, then $N=M\langle i\rangle$ denotes the shifted graded module
given by $N_j=M_{i+j}$, $j\in \mathbb{Z}$. An $A$-module
homomorphism $f$ between two graded modules $M$ and $N$ is a
homomorphism of graded modules if $f(M_i)\subseteq N_i$, for all
$i\in \mathbb{Z}$.  We set ${\rm Homgr}_A(M,N):=\bigoplus_{i\in
\mathbb{Z}}{\rm Hom}_{A-{\rm gr}}(M,N\langle i\rangle),$ where ${\rm
Hom}_{A-{\rm gr}}(M,N\langle i \rangle)$ denotes the space of all graded
homomorphisms between $M$ and $N\langle i\rangle$ (the space of
homogeneous morphisms of degree $i$). For finitely generated $A$-modules $M$ and $N$ there is an isomorphism of
vector spaces ${\rm Hom}_A(M,N)\cong {\rm Homgr}_A(M,N)$ that
gives us a grading on ${\rm Hom}_A(M,N)$  (see \cite[Corollary 2.4.4]{NFV}). 

\begin{exa}\label{ex:quiverGrading}
{\rm Let $A$ be a finite dimensional algebra given by the quiver
$Q$ and the ideal of relations $I$, i.e.\ $A= kQ/I$, where $I$ is
an admissible ideal of $kQ$ (we recommend \cite{AS} as a good introduction to
path algebras of quivers). Since the path algebra $kQ$ is generated by the vertices and arrows of $Q$, in order to grade $kQ$ it is sufficient to declare vertices and arrows to be homogeneous elements, and to define the degrees of the arrows since the 
vertices of $Q$ will be in degree 0. In order to grade $kQ/I$ it
is sufficient to ensure that $I$ is a homogeneous ideal of $kQ$.

}
\end{exa}

If $A$ is a graded algebra  and $e$ is a homogeneous primitive idempotent, then $Ae$ is going to be a graded projective indecomposable module. Moreover, ${\rm top}\,Ae\,$ will be in degree 0. We note here that if we have two different gradings on an indecomposable module, then they differ only by
a shift (see \cite[Lemma 2.5.3]{BGS}).  Shifting gradings on projective indecomposable modules leads us to the notion of graded Morita  equivalence.

%\pagebreak

\section{The existence of non-trivial gradings}

From now on we assume that $k$ is an algebraically closed field. For a finite dimensional algebra $A$ over  $k$,  there  is a correspondence between gradings on $A$ and homomorphisms of
algebraic groups from $\textbf{G}_m$ to ${\rm Aut}(A)$, where
$\textbf{G}_m$ is the multiplicative group $k^*$ of the field $k$, and ${\rm Aut}(A)$ is the group of  automorphisms of $A$.
For each grading $A=\bigoplus_{i\in \mathbb{Z}}A_i$ there is a
homomorphism of algebraic groups $\pi \, : \, \textbf{G}_m
\rightarrow {\rm Aut}(A)$ where an element $x\in k^*$ acts on
$A_i$ by multiplication by $x^i$, and vice versa, for a
homomorphism of algebraic groups $\pi \, : \, \textbf{G}_m
\rightarrow {\rm Aut}(A)$, there is a grading $A=\bigoplus_{i\in \mathbb{Z}}A_i$, where $A_i=\{a\in A\, | \,\pi(x)(a)=x^ia, \, {\rm for\,\, all}\,\,  x\in \textbf{G}_m  \}$. If
$A$ is graded and $\pi$ is the corresponding homomorphism, we will
write $(A,\pi)$ to denote that $A$ is graded with grading $\pi$.

\begin{defin}[{\cite[Section 5]{Rou}}]\label{grMorDef} Let $(A,\pi)$ and $(A,\pi^{\prime})$ be gradings
on a finite dimensional $k$-algebra $A$,  and let
$S_1,S_2,\dots,S_r$ be the isomorphism classes of simple
$A$-modules. We say that $(A,\pi)$ and $(A,\pi^{\prime})$ are
graded Morita equivalent if there exist integers $d_{ij}$, where
$1\leq j\leq {\rm dim }\, S_i$ and $1\leq i\leq r$, such that the
graded algebras $(A,\pi^{\prime})$ and ${\rm
Endgr}_{(A,\pi)}(\bigoplus_{i,j}P_i\langle d_{ij}\rangle)^{\rm
op}$ are isomorphic, where $P_i$ denotes the projective cover of
$S_i$.
\end{defin}
Note that graded algebras $(A,\pi)$ and $(A,\pi^{\prime})$ are
graded Morita equivalent if and only if their categories of graded
modules are equivalent. %  (cf.\ \cite[Section 5]{Rou}). %We will mainly consider gradings up to graded Morita equivalence.

In the next two propositions we give necessary conditions for a connected finite-dimensional algebra $A$ not to possess a non-trivial grading. 

\begin{prop}\label{basic} If $A$ is a finite dimensional $k$-algebra which does not have non-trivial gradings, then $A$ is a basic algebra.
\end{prop}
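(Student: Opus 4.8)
The plan is to prove the contrapositive: if $A$ is not basic, then $A$ carries a non-trivial grading. By the correspondence recalled just above, this amounts to producing a non-trivial homomorphism of algebraic groups $\pi : \textbf{G}_m \to \mathrm{Aut}(A)$. The grading I will produce is simply the Peirce decomposition of $A$ relative to a single idempotent, realized through a one-parameter family of inner automorphisms.

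First I would fix a decomposition $1 = e_1 + e_2 + \cdots + e_m$ of the identity into primitive orthogonal idempotents. Since $A$ is not basic, two of the resulting indecomposable projectives are isomorphic; after relabelling, say $Ae_1 \cong Ae_2$ as left $A$-modules. Using the standard identification $\mathrm{Hom}_A(Ae_1, Ae_2) \cong e_1 A e_2$, any such isomorphism is given by right multiplication by some $a \in e_1 A e_2$, and since the isomorphism is nonzero we get $a \neq 0$; in particular the off-diagonal Peirce component $e_1 A e_2$ is nonzero. This is the one structural fact about non-basic algebras that drives the whole argument.

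Next I would set $u_x := x e_1 + (1 - e_1)$ for $x \in k^*$. Each $u_x$ is a unit of $A$ with inverse $x^{-1} e_1 + (1 - e_1)$, and a direct computation gives $u_x u_y = u_{xy}$, so $x \mapsto \mathrm{Inn}(u_x)$, where $\mathrm{Inn}(u_x)(y) = u_x y u_x^{-1}$, defines a homomorphism of algebraic groups $\pi : \textbf{G}_m \to \mathrm{Aut}(A)$ (it is visibly a morphism of varieties, being polynomial in $x$ and $x^{-1}$, and conjugation by a unit is always an algebra automorphism). On the Peirce component $e_i A e_j$ the automorphism $\pi(x)$ acts by the scalar $x^{\delta_{i1} - \delta_{j1}}$, so the associated grading is $A = A_{-1} \oplus A_0 \oplus A_1$ with $A_1 = e_1 A(1 - e_1)$, $A_{-1} = (1 - e_1) A e_1$ and $A_0 = e_1 A e_1 \oplus (1 - e_1) A (1 - e_1)$.

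Finally, because $a \in e_1 A e_2 \subseteq e_1 A (1 - e_1) = A_1$ and $a \neq 0$, the degree-one component is nonzero, so the grading is non-trivial and the contrapositive is established. The only genuine content lies in the second step: the failure of basicness forces a nonzero off-diagonal Peirce component. Once that is in hand, both the construction of the grading and the verification that $\pi$ is a non-trivial homomorphism of algebraic groups are routine, and I expect no serious obstacle beyond taking care that the $e_i$ are truly primitive and orthogonal, so that the isomorphism $Ae_1 \cong Ae_2$ genuinely reflects non-basicness.
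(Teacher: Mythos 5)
Your proof is correct, and it takes a genuinely different route from the paper's. The paper stays inside the graded Morita machinery: writing $A=P_0\oplus P_0\oplus P$ with two copies of the projective cover $P_0$ of a simple module of dimension greater than $1$, it regrades $A\cong {\rm Endgr}_{A}(P_0\langle d_1 \rangle \oplus P_0\langle d_2 \rangle \oplus P\langle d_3 \rangle)^{op}$ with $d_1\neq d_2$, so that the identity map $P_0\langle d_1\rangle\to P_0\langle d_2\rangle$ becomes homogeneous of nonzero degree $d_1-d_2$. You instead build the grading by hand, as the cocharacter $x\mapsto {\rm Inn}(xe_1+(1-e_1))$, i.e.\ the Peirce grading $A_1=e_1A(1-e_1)$, $A_0=e_1Ae_1\oplus (1-e_1)A(1-e_1)$, $A_{-1}=(1-e_1)Ae_1$, and you use the fact that $Ae_1\cong Ae_2$ forces $e_1Ae_2\neq 0$ to conclude that $A_1\neq 0$. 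In substance the two constructions produce essentially the same grading (yours is what the paper's recipe gives for shifts $d_1=1$, $d_2=d_3=0$, read through the identification ${\rm End}_A(A)^{op}\cong A$), but the presentations buy different things. The paper's version is uniform with the proof of Proposition \ref{quiv}, which uses the identical shifting trick, and it makes visible why such gradings, while non-trivial, are graded Morita equivalent to the trivial one --- a distinction the paper needs later. Your version is more elementary and self-contained: it needs neither the graded endomorphism algebra formalism nor the isomorphism ${\rm Endgr}_A(A)^{op}\cong A$; the grading axioms follow from orthogonality of idempotents alone (for instance $A_1A_1=0$ because $(1-e_1)e_1=0$); and, since the Peirce argument does not actually require the cocharacter dictionary, it works over an arbitrary field rather than only an algebraically closed one. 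One remark worth making explicit: your $\pi$ lands in the inner automorphisms, so its image in ${\rm Out}(A)$ is the trivial cocharacter, and hence your grading is also graded Morita equivalent to the trivial grading; this is consistent, since the proposition concerns only the existence of gradings that are non-trivial in the naive sense.
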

\begin{proof} Let us assume that $A$ is not a basic algebra. There is at least one simple $A$-module, say $S_0$, whose dimension is greater than 1. If $P_0$ is the projective cover of $S_0$, then $P_0$ appears at least twice as a summand of $A$, i.e.\ we have a decomposition $A=P_0\oplus P_0\oplus P$, where $P$ is a projective module. If we look at $A$ as a trivially graded algebra, then the graded algebra $A^{\prime}={\rm Endgr}_{A}(P_0\langle d_1 \rangle \oplus P_0\langle d_2 \rangle \oplus P\langle d_3 \rangle)^{op} $ is graded Morita equivalent to the trivially graded algebra $A$. If we choose $d_1$ and $d_2$ to be different integers, then $A^{\prime}$ is a non-trivially graded algebra, because the identity map from $P_0\langle d_1 \rangle$  to  $P_0\langle d_2 \rangle$ is a homogeneous element of degree $d_1-d_2\neq 0$. Thus, we obtained a non-trivial grading on $A$. 
\end{proof}

The non-existence of non-trivial gradings on a basic algebra imposes a strong condition on its quiver.

\begin{prop}\label{quiv}
If $A$ is a connected basic algebra with no non-trivial gradings, then its quiver has only one vertex.
\end{prop}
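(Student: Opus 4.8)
The plan is to prove the contrapositive: assuming the quiver $Q$ of $A$ has at least two vertices, I will exhibit a non-trivial grading. Since $A$ is a connected basic finite dimensional algebra over the algebraically closed field $k$, Gabriel's theorem presents it as $A = kQ/I$ with $I$ an admissible ideal and $Q$ a connected quiver; let $e_1,\dots,e_n$ be the primitive orthogonal idempotents attached to the vertices. As observed in Example~\ref{ex:quiverGrading}, grading $A$ amounts to assigning degrees to the arrows in such a way that $I$ becomes a homogeneous ideal, so the whole problem reduces to producing such a degree assignment that is not identically zero.

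The construction I would use is a ``telescoping'' grading. I choose integers $d_1,\dots,d_n$, one per vertex, and declare every arrow $\alpha$ from vertex $i$ to vertex $j$ to have degree $d_j - d_i$ (vertices remaining in degree $0$). The decisive feature of this choice is that the degree of any path depends only on its source and target and not on the path itself: if $p = \alpha_\ell\cdots\alpha_1$ runs from $i$ to $j$, the intermediate contributions cancel telescopically and $\deg p = d_j - d_i$. Consequently any two paths with the same endpoints receive the same degree.

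This telescoping is exactly what forces $I$ to be homogeneous, which I regard as the key step. Since $I$ is a two-sided ideal and $1 = e_1 + \cdots + e_n$, any $\rho \in I$ decomposes as $\rho = \sum_{i,j} e_j \rho e_i$ with each $e_j\rho e_i \in I$; but $e_j \rho e_i$ is a linear combination of paths from $i$ to $j$, hence homogeneous of degree $d_j - d_i$. Thus every homogeneous component of an element of $I$ again lies in $I$, so $I$ is homogeneous and the assignment descends to a genuine $\mathbb{Z}$-grading of $A$ (equivalently, the map sending $\alpha \mapsto x^{d_j - d_i}\alpha$ for $x\in \textbf{G}_m$ is a well-defined automorphism preserving $I$). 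It then remains only to guarantee non-triviality. Because $Q$ is connected and has at least two vertices, there is an arrow $\alpha$ between two distinct vertices $a$ and $b$; setting $d_a = 0$ and $d_v = 1$ for all $v \neq a$ yields $\deg \alpha = 1 \neq 0$. Since $I$ is admissible we have $I \subseteq J^2$, where $J$ is the ideal of $kQ$ generated by the arrows, and as $\alpha \notin J^2$ its image in $A$ is a nonzero homogeneous element of degree $1$; hence the grading is genuinely non-trivial, completing the contrapositive. The only points requiring care are the reduction to the Gabriel presentation and the verification that the chosen arrow survives in the quotient; the homogeneity of $I$, though the heart of the matter, follows purely formally from the telescoping identity.
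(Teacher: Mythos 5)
Your proof is correct, but it takes a genuinely different route from the paper's. The paper stays inside the graded Morita framework it has just set up: starting from the trivially graded $A$, it shifts the projective summands and forms $A^{\prime}={\rm Endgr}_{A}(P_i\langle d_1 \rangle \oplus P_j\langle d_2 \rangle \oplus P\langle d_3 \rangle)^{op}$, which as an ungraded algebra is again $A$ (by basicness), and the nonzero map $P_i\to P_j$ supplied by the arrow becomes a homogeneous element of degree $d_1-d_2\neq 0$; this is the same mechanism as the proof of Proposition \ref{basic}. You instead work directly with the presentation $A=kQ/I$ and define a ``potential'' grading, $\deg\alpha=d_j-d_i$ for an arrow from $i$ to $j$, so that the telescoping identity makes every element of $e_j(kQ)e_i$ homogeneous; consequently \emph{every} two-sided ideal, and in particular $I$ (about which nothing needs to be known beyond admissibility), is automatically homogeneous, and admissibility $I\subseteq J^2$ guarantees the chosen arrow survives in the quotient with nonzero degree (your specific choice gives degree $\pm 1$ depending on the arrow's orientation, which is all that is needed). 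Your argument is sound: the decomposition $\rho=\sum_{i,j}e_j\rho e_i$ with each piece in $I$ and homogeneous does show that every homogeneous component of $\rho$ lies in $I$. In fact the two constructions produce essentially the same grading: shifting $P_i$ by $d_i$ in the paper's construction places each space $e_jAe_i$ in the single degree given by the difference of the shifts, which is exactly your telescoping grading. What your route buys is elementarity and transparency --- no graded endomorphism algebras or graded Morita equivalence are needed, and the homogeneity of $I$ is purely formal, in the spirit of Example \ref{ex:quiverGrading}. What the paper's route buys is uniformity with the rest of the paper: it avoids invoking the quiver presentation as such, reuses the argument of Proposition \ref{basic}, and connects directly to the cocharacter and graded-Morita classification (Proposition \ref{cokarak}) on which the main theorem rests.
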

\begin{proof}
Let us assume that the quiver of $A$ has at least two vertices, say $i$ and $j$. Since $A$ is a connected algebra, we can assume that in the quiver of $A$ there is an arrow starting at $i$ and ending at $j$. If $P_i$ and $P_j$ are the corresponding projective indecomposable modules, then the simple module corresponding to $i$ appears as a composition factor in the second radical layer of $P_j$. Hence, there is a non-trivial map from $P_i$ to $P_j$.  If we start from the trivially graded algebra $A$, we get a graded algebra $A^{\prime}={\rm Endgr}_{A}(P_i\langle d_1 \rangle \oplus P_j\langle d_2 \rangle \oplus P\langle d_3 \rangle)^{op}$ that is graded Morita equivalent to $A$. Again, if we choose $d_1$ and $d_2$ to be different integers, then $A^{\prime}$ is a non-trivially graded algebra, because a map from $P_i\langle d_1 \rangle$  to  $P_j\langle d_2 \rangle$ of highest possible rank is a homogeneous element  of degree $d_1-d_2\neq 0$. 
\end{proof}

Thus, in order for a connected finite-dimensional algebra to possess no non-trivial gradings, it has to be basic, and its quiver has to have one vertex. On the other hand, it is not difficult to find a basic algebra whose quiver has only one vertex, that can be non-trivially graded. 
\begin{exa}
The algebra 
defined by the following quiver and relations 
\begin{itemize}\item[] 
\begin{multicols}{2}
$
\hspace{-25mm}\xymatrix{&&&\stackrel{e}{\bullet}\ar@(dl,ul)^{\alpha}\ar@(dr,ur)_{\beta}}\vspace{15mm}
$

$\hspace{-20mm}\alpha^2=0=\beta^2$, $(\alpha\beta)^r=(\beta \alpha)^r,$
\end{multicols}
\end{itemize} 
where  $r$ is a positive integer,  has infinitely many non-trivial gradings indexed by $\mathbb{Z}^2$ (for any pair $(d_1,d_2)$ of integers define $\deg(e)=0$, $\deg(\alpha)=d_1$, and $\deg(\beta)=d_2$ to get a grading on this algebra), with no two of them graded Morita equivalent (cf.\ \cite[Proposition 5.1]{BDeh}).   
\end{exa}

Recall that a grading on a finite
dimensional algebra $A$ can be seen as a cocharacter $\pi \, : \,
\textbf{G}_m\rightarrow \, {\rm Aut}(A)$. We will use the same
letter $\pi$ to denote the corresponding cocharacter of ${\rm
Out}(A)$, the group of outer automorphisms of $A$ (i.e.\ the quotient of ${\rm Aut}(A)$ by the subgroup of all inner automorphisms), which is given by the composition of $\pi$ and the canonical
surjection.
The image of the cocharacter $\pi$  is contained in a maximal torus of ${\rm Aut}(A)$.  Tori are contained in ${\rm Aut}(A)^0$, the
connected component of ${\rm Aut}(A)$ that contains the identity element (see \cite{BRL} for more details on algebraic groups). Cocharacters $\pi$ and $\pi^{\prime}$ of ${\rm Aut}(A)$ are conjugate if
there exists $g \in {\rm Aut}(A)$ such that $\pi^{\prime}(x)=g\pi g^{-1}(x)$
for all $x\in \textbf{G}_m$.

The following proposition tells us how to classify all gradings on
$A$ up to graded Morita equivalence. 

\begin{prop}[{\cite[Corollary 5.9]{Rou}}]\label{cokarak}
Two basic graded algebras $(A,\pi)$ and $(A,\pi^{\prime})$ are graded
Morita equivalent if and only if the corresponding cocharacters
$\,\pi \, : \, \textbf{G}_m \rightarrow {\rm Out}(A)$  and
$\,\pi^{\prime} \, : \, \textbf{G}_m \rightarrow {\rm Out}(A)$ are
conjugate.
\end{prop}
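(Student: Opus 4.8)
The plan is to translate both sides of the equivalence into statements about the cocharacter $\pi$ inside ${\rm Aut}(A)$, using the grading--cocharacter dictionary recalled before Definition~\ref{grMorDef}, and then to divide out by inner automorphisms. First I would record the rigid version of the correspondence: two gradings give isomorphic graded algebras, $(A,\pi)\cong(A,\pi')$, if and only if $\pi$ and $\pi'$ are conjugate in ${\rm Aut}(A)$. Indeed, a graded-algebra isomorphism $\phi$ is an element of ${\rm Aut}(A)$ carrying the degree-$i$ part of $\pi$ to that of $\pi'$, which by the defining action of the cocharacters is exactly the relation $\pi'(x)=\phi\,\pi(x)\,\phi^{-1}$ for all $x\in\textbf{G}_m$. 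Next, since $A$ is basic each simple module is one-dimensional, so in Definition~\ref{grMorDef} the index $j$ is redundant and the shift data is a single integer $d_i$ per vertex; thus $(A,\pi)$ and $(A,\pi')$ are graded Morita equivalent precisely when $(A,\pi')\cong{\rm Endgr}_{(A,\pi)}(\bigoplus_i P_i\langle d_i\rangle)^{\rm op}$ as graded algebras, for some $\mathbf{d}=(d_i)\in\mathbb{Z}^{Q_0}$.

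The heart of the argument is to identify the operation ``shift the projectives by $\mathbf{d}$'' with an inner modification of the cocharacter. Choosing the primitive orthogonal idempotents $e_i$ homogeneous of degree $0$ (so that $\pi(x)(e_i)=e_i$), the element $t_{\mathbf{d}}(x)=\sum_i x^{d_i}e_i$ is a unit of $A$ and $x\mapsto t_{\mathbf{d}}(x)$ is multiplicative. A direct check on the block $e_iAe_j$ shows that ${\rm Inn}(t_{\mathbf{d}}(x))$ scales it by $x^{d_i-d_j}$, which is exactly the degree shift by $d_i-d_j$ produced by passing to $P_i\langle d_i\rangle$ and $P_j\langle d_j\rangle$. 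Hence the grading on ${\rm Endgr}_{(A,\pi)}(\bigoplus_i P_i\langle d_i\rangle)^{\rm op}$ corresponds to the cocharacter $x\mapsto {\rm Inn}(t_{\mathbf{d}}(x))\,\pi(x)$, which one verifies is a cocharacter using $\pi(x)(e_i)=e_i$. Since ${\rm Inn}(t_{\mathbf{d}}(x))$ is inner, this cocharacter has the same image $\bar{\pi}$ in ${\rm Out}(A)$ as $\pi$ does.

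The forward implication is now immediate: combining the two previous paragraphs, graded Morita equivalence gives $\pi'(x)=\phi\,{\rm Inn}(t_{\mathbf{d}}(x))\,\pi(x)\,\phi^{-1}$ in ${\rm Aut}(A)$ for some $\phi$ and $\mathbf{d}$; projecting to ${\rm Out}(A)$ kills the inner factor and yields $\bar{\pi'}=\bar{\phi}\,\bar{\pi}\,\bar{\phi}^{-1}$. For the converse, suppose $\bar{\pi'}$ and $\bar{\pi}$ are conjugate in ${\rm Out}(A)$; lifting the conjugating element to $\phi\in{\rm Aut}(A)$ and replacing $\pi$ by the graded-isomorphic (hence graded Morita equivalent) cocharacter $\phi\pi\phi^{-1}$, we reduce to the case $\bar{\pi'}=\bar{\pi}$ in ${\rm Out}(A)$.

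It then remains to show that two cocharacters with the same image in ${\rm Out}(A)$ are related by a shift $t_{\mathbf{d}}$ up to a graded isomorphism. I would bring both images into a single maximal torus $T$ of ${\rm Aut}(A)$: since $\bar{\pi'}=\bar{\pi}$ has image in the torus $\bar{T}$ that is the image of $T$, the image of $\pi'$ lies in the preimage of $\bar{T}$ and can be conjugated into $T$ by an element of that preimage, and because $\bar{T}$ is abelian this conjugation leaves $\bar{\pi'}$ unchanged. Working inside $T$, the difference $\kappa=\pi'\pi^{-1}$ is a cocharacter landing in the kernel $T\cap{\rm Inn}(A)$, a subtorus of ${\rm Inn}(A)$. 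The decisive structural input is that, $A$ being basic, every block $e_iAe_i$ is local and the off-diagonal blocks lie in the radical, so a maximal torus of $A^{\times}$ is the ``vertex torus'' $\{\sum_i\lambda_ie_i:\lambda_i\in k^{\times}\}$; consequently a maximal torus of ${\rm Inn}(A)$ consists exactly of the idempotent conjugations ${\rm Inn}(\sum_i\lambda_ie_i)$, whose cocharacters are precisely the maps $x\mapsto{\rm Inn}(t_{\mathbf{d}}(x))$. Thus $\kappa(x)={\rm Inn}(t_{\mathbf{d}}(x))$ for some $\mathbf{d}$, giving $\pi'(x)={\rm Inn}(t_{\mathbf{d}}(x))\,\pi(x)$, i.e.\ $(A,\pi')$ is the $\mathbf{d}$-shift of $(A,\pi)$, and the two are graded Morita equivalent. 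I expect the main obstacle to be exactly this last reduction---matching $T\cap{\rm Inn}(A)$ with the vertex torus and handling the cocharacter-lattice bookkeeping (connectedness of the kernel, conjugacy of maximal tori)---whereas the graded-isomorphism dictionary and the forward direction are routine.
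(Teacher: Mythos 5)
First, a point of comparison: the paper does not prove this proposition at all --- it is imported verbatim from Rouquier (\cite[Corollary 5.9]{Rou}) and used as a black box, so there is no internal argument to measure yours against. Judged on its own terms, your strategy is the right one and is in the spirit of Rouquier's: the dictionary ``graded isomorphism $\Leftrightarrow$ conjugacy of cocharacters in ${\rm Aut}(A)$'', the identification of the shift by $\mathbf{d}$ with the inner cocharacter $x\mapsto{\rm Inn}(t_{\mathbf{d}}(x))$ (your degree bookkeeping on $e_iAe_j$, namely that an element of $\pi$-degree $\delta$ acquires degree $\delta+d_i-d_j$ in the shifted endomorphism algebra, is correct, as is multiplicativity of $x\mapsto{\rm Inn}(t_{\mathbf{d}}(x))\pi(x)$, which uses $\pi(x)(e_i)=e_i$), and the reduction of the converse to two cocharacters landing in one maximal torus are all sound. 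The forward implication is complete as written.

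The genuine gap is exactly the step you flag. Having conjugated ${\rm im}(\pi')$ into $T$, you assert that the maximal torus $(T\cap{\rm Inn}(A))^{0}$ of ${\rm Inn}(A)$ ``consists exactly of the idempotent conjugations ${\rm Inn}(\sum_i\lambda_ie_i)$'' for the idempotents $e_i$ fixed at the start. As stated this is false: the vertex torus $V=\{{\rm Inn}(\sum_i\lambda_ie_i)\}$ is indeed a maximal torus of ${\rm Inn}(A)$, but maximal tori are only conjugate, not equal, so nothing forces $T\cap{\rm Inn}(A)$ to be $V$, and the conclusion $\kappa(x)={\rm Inn}(t_{\mathbf{d}}(x))$ does not follow for the $e_i$ you need (those of $\pi$-degree $0$, without which the shift interpretation breaks). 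The repair is to build $T$ correctly at the outset rather than to match tori afterwards: since $\pi(x)(e_i)=e_i$, the torus $V$ centralizes ${\rm im}(\pi)$, so $V$ and ${\rm im}(\pi)$ generate a torus and one may choose the maximal torus $T$ to contain both. With this choice $V\subseteq(T\cap{\rm Inn}(A))^{0}$, and maximality of $V$ in ${\rm Inn}(A)$ gives equality, so $\kappa$ is a cocharacter of $V$ itself. Finally, the lattice point you call bookkeeping does need the connectedness you anticipate: $\kappa$ lifts to a cocharacter of $S^{\times}=\{\sum_i\lambda_ie_i\}$, i.e.\ has the form ${\rm Inn}(t_{\mathbf{d}})$, because the kernel of $S^{\times}\to V$ is the group of blockwise scalar units, a subtorus, hence connected, which makes $X_*(S^{\times})\to X_*(V)$ surjective (for a quotient by a non-connected kernel this can fail). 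With these two repairs your argument closes.
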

From this proposition we see that in order to classify gradings on
$A$ up to graded Morita equivalence, we need to compute maximal
tori in  ${\rm Out}(A)$.

\begin{thm} \label{Glavna}
Let $A$ be a connected finite dimensional algebra over $k$. The algebra $A$ can not be non-trivially graded if and only if $A$ is basic,  its quiver has one vertex, and its group of outer automorphisms ${\rm Out}(A)$ is unipotent. 
\end{thm}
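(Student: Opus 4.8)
The plan is to package the two implications through the correspondence between gradings and cocharacters $\pi \colon \mathbf{G}_m \to \mathrm{Aut}(A)$, using the already-established necessary conditions and Proposition~\ref{cokarak} to reduce everything to statements about cocharacters of $\mathrm{Out}(A)$. Propositions~\ref{basic} and~\ref{quiv} already supply two of the three conditions in the "only if" direction: if $A$ has no non-trivial grading, then $A$ must be basic with a one-vertex quiver. So the substantive content on that side is to show that $\mathrm{Out}(A)$ is unipotent, and on the other side to show that those three conditions together force triviality of every grading.

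I would argue both directions simultaneously by characterizing when \emph{all} cocharacters into $\mathrm{Out}(A)$ are trivial. Recall from the discussion preceding the theorem that a grading $\pi$ descends to a cocharacter of $\mathrm{Out}(A)$ whose image lands in a maximal torus of $\mathrm{Out}(A)^0$, and that by Proposition~\ref{cokarak} two gradings on a basic algebra are graded Morita equivalent precisely when these cocharacters are conjugate in $\mathrm{Out}(A)$. The key algebraic-group fact I would invoke is that a connected linear algebraic group $G$ over an algebraically closed field is unipotent if and only if it contains no non-trivial torus, equivalently if and only if $\mathrm{Hom}(\mathbf{G}_m, G)$ consists only of the trivial homomorphism; a unipotent group has trivial maximal torus, and conversely any non-unipotent connected group contains $\mathbf{G}_m$ as a subtorus, yielding a non-trivial cocharacter. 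Thus $\mathrm{Out}(A)$ is unipotent exactly when there is no non-trivial cocharacter $\mathbf{G}_m \to \mathrm{Out}(A)$.

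For the "if" direction, suppose $A$ is basic with one vertex and $\mathrm{Out}(A)$ unipotent. Given any grading $\pi$, its image in $\mathrm{Out}(A)$ is a cocharacter into a unipotent group, hence trivial; so $\pi$ factors through the inner automorphisms. The remaining step is to check that a grading whose associated $\mathrm{Out}(A)$-cocharacter is trivial must itself be the trivial grading. Here I would use that $A$ is basic with a single vertex: its quiver has one vertex, so $A \cong kQ/I$ with $A$ local, every primitive idempotent is conjugate to the identity, and an inner cocharacter corresponds under the dictionary to a grading that is graded Morita equivalent to the trivial one; since $A$ is basic the graded Morita equivalence is an isomorphism of graded algebras (no shifts among non-isomorphic projectives are available, there being one simple module), forcing the grading itself to be trivial. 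For the "only if" direction, I invoke Propositions~\ref{basic} and~\ref{quiv} for the first two conditions, and for unipotence I argue contrapositively: if $\mathrm{Out}(A)$ were not unipotent it would admit a non-trivial cocharacter $\pi$, which lifts to a non-trivial cocharacter of $\mathrm{Aut}(A)$ and hence to a non-trivial grading on $A$, contradicting the hypothesis.

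The main obstacle I anticipate is the final reduction in the "if" direction: passing cleanly from "the $\mathrm{Out}(A)$-cocharacter is trivial" to "the grading on $A$ is trivial." Triviality in $\mathrm{Out}(A)$ only says the grading cocharacter lands in the inner automorphisms, and inner automorphisms can be non-trivial, so one must rule out a genuinely non-trivial grading that merely happens to act by inner automorphisms. I expect this is exactly where the one-vertex hypothesis is indispensable: with a single simple module there is no freedom to shift summands apart, so via Proposition~\ref{cokarak} any such grading is graded Morita equivalent, and in the basic one-vertex case actually isomorphic, to the trivial grading—and I would want to verify carefully that graded-Morita triviality upgrades to honest triviality of the grading in this setting, rather than merely up to equivalence.
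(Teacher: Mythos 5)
Your proposal is correct and takes essentially the same approach as the paper: Propositions \ref{basic} and \ref{quiv} together with lifting a non-trivial cocharacter of ${\rm Out}(A)$ to ${\rm Aut}(A)$ give the ``only if'' direction, and unipotence forcing every cocharacter of ${\rm Out}(A)$ to be trivial, combined with Proposition \ref{cokarak}, gives the ``if'' direction. The upgrade you flag as needing careful verification---from graded Morita triviality to honest triviality of the grading---is carried out in the paper exactly as you sketch it: since $A$ is basic with a single simple module, the only graded algebras graded Morita equivalent to the trivially graded $A$ are ${\rm Endgr}_{A}(A\langle d \rangle)^{\rm op}$, and each of these is again the trivially graded algebra.
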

\begin{proof} If there are no non-trivial gradings on $A$, then by Proposition \ref{basic} and Proposition \ref{quiv} we have that $A$ has to be basic and its quiver has to have only one vertex. If the group ${\rm Out}(A)$ is not unipotent, then its maximal tori would be non-trivial and there would exist a non-trivial cocharacter $\,\pi \, : \, \textbf{G}_m \rightarrow {\rm Out}(A)$. By Lemma 5.5 in 
\cite{Rou}, this cocharacter can be lifted to a non-trivial cocharacter $\, \tilde{\pi} \, : \, \textbf{G}_m \rightarrow {\rm Aut}(A)$. This non-trivial cocharacter gives us a non-trivial grading on $A$.  Thus, ${\rm Out}(A)$ has to be unipotent. 

Assume now that  $A$ is basic,  its quiver has one vertex, and its group of outer automorphisms ${\rm Out}(A)$ is unipotent. Since  ${\rm Out}(A)$ is unipotent, any two gradings on $A$ correspond to the same, trivial cocharacter.  This means that their corresponding cocharacters are conjugate in ${\rm Out}(A)$, and that any two gradings on $A$ must be graded Morita equivalent. In particular, any grading on $A$ is graded Morita equivalent to the trivial grading. Because $A$ is basic with only one simple module, a graded algebra that is graded Morita equivalent to the trivially graded algebra $A$ is ${\rm Endgr}_{A}(A\langle d \rangle)^{op}$, with $d$ being an integer. But this graded algebra is again the trivially graded algebra. 
\end{proof}

\begin{rem} \label{rimark}
By Remark 5.10 in \cite{Rou}, Proposition \ref{cokarak} also  holds true if we remove the assumption that $A$ is a basic algebra and replace conjugacy of cocharacters in ${\rm Out}(A)$ by the conjugacy in ${\rm Pic}(A)$, the Pickard group of $A$, which contains ${\rm Out}(A)$ (see \cite{Sao} and  \cite{Rou} for details on Pickard group). 
If $A$ is a connected finite dimensional algebra such that ${\rm Out}(A)$ is a unipotent group, then for any two gradings on $A$, the corresponding cocharacters of ${\rm Out}(A)$ are obviously conjugate  both in ${\rm Out}(A)$ and ${\rm Pic}(A)$, because they are both equal to the trivial cocharacter. Hence, any grading on $A$ is graded Morita equivalent to the trivial grading.  
\end{rem}

\begin{rem} 
Let ${\rm Out}^K (A)$ be  the subgroup of
${\rm Out}\, (A)$ of those automorphisms fixing the isomorphism
classes of simple $A$-modules. This is a subgroup consisting of those automorphisms $f$ such that $S^f\cong S$, for every simple $A$-module $S$, where $S^f$ is an $A$-module whose underlying set is $S$ and $a\in A$ acts on an element $s\in S$ by $a*s=f(a)s$.   Since ${\rm Out}^K (A)$ contains ${\rm Out}^0(A)$, 
the connected component of ${\rm Out}(A)$ that
contains the identity element, we have that maximal tori in ${\rm
Out}(A)$ are contained in ${\rm Out}^K (A)$, i.e.\  if ${\rm Out}^K(A)$ is a unipotent group, then ${\rm Out}(A)^0$ is a unipotent group as well.  It is often the case, in particular for a basic algebra given by a quiver and relations, that it is easier to compute the group ${\rm Out}^K(A)$ then ${\rm Out}(A)$.  
\end{rem}

\section{Quaternion blocks of group algebras}

A block of a group algebra over a field of characteristic $p$ is of tame representation type if and only if $p=2$ and its defect group is a
dihedral, semidihedral, or generalized quaternion group.  If $B$ is a block of a finite group $G$ over an algebraically closed field $k$ of characteristic $2$ with one isomorphism
class of simple modules and with a quaternion defect group of
order $2^{n-2}$ for some positive integer $n$, then $B$ is Morita
equivalent to the algebra $Q(1E)^r$ from Erdmann's classification of tame blocks \cite{Er2}, where $r=2^{n-2}$, and  
$Q(1E)^r$ is the algebra
defined by the quiver and relations
%\begin{itemize}\item[] 
\begin{multicols}{2}
$
\hspace{-10mm}\xymatrix{&&&\stackrel{e}{\bullet}\ar@(dl,ul)^{\alpha}\ar@(dr,ur)_{\beta}}\vspace{15mm}
$
$\hspace{5mm}\begin{aligned}
\alpha^2&=(\beta\alpha)^{r-1}\beta, (\alpha\beta)^r=(\beta \alpha)^r,\\ 
  \beta^2&=(\alpha\beta)^{r-1}\alpha, (\alpha\beta)^r\alpha=0.\end{aligned}$
\end{multicols}
%\end{itemize} 
Thus, any block with a quaternion defect group and one isomorphism class
of simple modules is Morita equivalent to some algebra $Q(1E)^r$ from the
above family of algebras.

We will now prove that $Q(1E)^r$ does not admit a non-trivial
grading.  If we try to take the most obvious generating set consisting of arrows and the vertex of the quiver of $Q(1E)^r$ as a homogenous set of generators, then it would have to hold that $2\deg(\alpha)=r\deg(\beta)+(r-1)\deg(\alpha)$ and $2\deg(\beta)=r\deg(\alpha)+(r-1)\deg(\beta)$, with $\deg(\alpha)$ and $\deg(\beta)$ denoting the degrees of $\alpha$ and $\beta$. It is obvious that this system of linear equations does not have non-trivial integer solutions. If we try some other generating set, we will end up with the same result, we will not  be able to construct a non-trivial grading. We will prove that there is no non-trivial grading by showing that every outer automorphism of
$Q(1E)^r$ is unipotent. It will follow that the maximal torus of
${\rm Out}(Q(1E)^r)$ is the trivial group.

If $\varphi$ is an arbitrary automorphism in ${\rm Out}(Q(1E)^r)$, then  $\{e\}$, the set of the vertices of the quiver of $Q(1E)^r$, is a complete set of
primitive orthogonal idempotents. Also, the set
$\{\varphi(e)\}$ is a complete set of
primitive orthogonal idempotents. From classical ring theory (e.g.\
\cite[Theorem 3.10.2]{Jac}) we know that there exists an
invertible element $x$ such that
$x^{-1}\varphi(e)x=e$. Since we work inside ${\rm Out}(Q(1E)^r)$, we can assume that $\varphi(e)=e$, and that $\varphi$ fixes the isomorphism classes of simple modules.

Since $\varphi({\rm rad}\, Q(1E)^r)\subseteq {\rm rad}\, Q(1E)^r $, for a given arrow $t$ in the
quiver of $Q(1E)^r$, $\varphi(t)$ is a linear combination of paths of length greater than 0.

Thus, we can assume that
$$
\begin{array}{c@{=}l}
\varphi(e)&e,\\
\varphi(\alpha)&a_1\alpha+a_2\beta +a_3x,\\
\varphi(\beta)&b_1\alpha+b_2\beta +b_3y,
\end{array}
$$
where $a_i,b_i\in k$, and $x,y\in {\rm rad}^2Q(1E)^r$.  Direct computation shows that
$(\varphi(\alpha)\varphi(\beta))^r=(a_1b_2)^r(\alpha\beta)^r+(a_2b_1)^r(\beta\alpha)^r$.
From the relations of $Q(1E)^r$ we have that
$(\varphi(\alpha)\varphi(\beta))^r=\varphi(\alpha^3)=\varphi(\beta^3)$.
It follows that $a_1^2a_2=0$ and $b_1b_2^2=0$. Because
$(\varphi(\alpha)\varphi(\beta))^r\neq 0$, then either $a_1\neq
0\neq b_2$ and $a_2=b_1=0$, or $a_2\neq 0\neq b_1$ and
$a_1=b_2=0.$

If the former holds, then from
$\varphi(\alpha^2)=\varphi(\beta)(\varphi(\alpha)\varphi(\beta))^{r-1}$
it follows after an elementary but tedious computation that
$a_1^2=a_1^{r-1}b_2^r.$ Similarly, from
$\varphi(\beta^2)=\varphi(\alpha)(\varphi(\beta)\varphi(\alpha))^{r-1}$
it follows that $b_2^2=a_1^rb_2^{r-1}$. Subsequently, we have that
$a_1^3=b_2^3$. From $a_1^3=a_1^{r}b_2^r$ and $a_1^3=b_2^3$, it now follows that there exists an even integer
$m$ such that $a_1^m=b_2^m=1$, e.g.\ $m=12r-18$ (we get this number by taking the cube of both sides of $a_1^3=a_1^{r}b_2^r$ and using that  $a_1^3=b_2^3$) . For such an integer $m$ we have
that
$$(\varphi-{\rm id}_{Q(1E)^r})^m=\varphi^m-{\rm id}_{Q(1E)^r}.$$
Since $\varphi^m-{\rm id}_{Q(1E)^r}$ is a nilpotent homomorphism,
it follows that $\varphi$ is a unipotent homomorphism.

If $a_2\neq 0\neq b_1$ and $a_1=b_2=0$, then as in the previous
case, using the same arguments we conclude that
$a_2^rb_1^{r-3}=1$, $a_2^{r-3}b_1^{r}=1$ and $a_2^3=b_1^3.$ Now,
$\varphi^2$ is a unipotent homomorphism, by the previous
paragraph. Since we work over a field of characteristic 2, it
follows that $(\varphi- {\rm id}_{Q(1E)^r})^2$ is a nilpotent
homomorphism. Hence, $\varphi$ is a unipotent homomorphism.

\begin{thm}
The group ${\rm Out}(Q(1E)^r)$ is unipotent. There are no non-trivial gradings on $Q(1E)^r$.
\end{thm}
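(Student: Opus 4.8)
The plan is to read off both assertions of the theorem from the computation carried out above together with the main result, Theorem \ref{Glavna}; essentially no new calculation is needed. The case analysis preceding the statement shows that an arbitrary $\varphi\in{\rm Out}(Q(1E)^r)$, represented by an automorphism fixing the vertex $e$, is a unipotent endomorphism of the vector space $Q(1E)^r$: in the first case $\varphi^m-{\rm id}$ is nilpotent for a suitable even $m$ and hence so is $(\varphi-{\rm id})^m$, while in the second case $(\varphi-{\rm id})^2$ is nilpotent. So I would first consolidate this into the statement that \emph{every} element of ${\rm Out}(Q(1E)^r)$ is unipotent, and then invoke a general group-theoretic principle to upgrade this to unipotence of the whole group.

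For the first step I would note that every class in ${\rm Out}(Q(1E)^r)$ admits a representative with $\varphi(e)=e$, since the two complete sets of primitive orthogonal idempotents $\{e\}$ and $\{\varphi(e)\}$ are conjugate by a unit and conjugation by a unit is an inner automorphism, hence trivial in ${\rm Out}$. As $Q(1E)^r$ has a single simple module, such a representative automatically fixes its isomorphism class, so the hypotheses of the preceding computation are met and the representative is unipotent in ${\rm Aut}(Q(1E)^r)\subseteq{\rm GL}(Q(1E)^r)$. Because the quotient map ${\rm Aut}(Q(1E)^r)\to{\rm Out}(Q(1E)^r)$ is a morphism of algebraic groups and therefore respects Jordan decomposition, the image is again unipotent, so every element of ${\rm Out}(Q(1E)^r)$ is unipotent. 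By the standard characterization of unipotent linear algebraic groups over an algebraically closed field (a group all of whose elements are unipotent is unipotent; see \cite{BRL}), ${\rm Out}(Q(1E)^r)$ is unipotent, and in particular its maximal torus is trivial. This proves the first sentence.

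For the second sentence I would simply verify the hypotheses of Theorem \ref{Glavna}. The algebra $Q(1E)^r=kQ/I$ is given by a quiver $Q$ with a single vertex and an admissible ideal $I$, so it is connected and basic and its quiver has one vertex; together with the unipotence of ${\rm Out}(Q(1E)^r)$ just established, Theorem \ref{Glavna} yields that $Q(1E)^r$ admits no non-trivial grading. The only genuine obstacle in all of this is the structural computation already completed before the statement, which forces the leading coefficients $a_1,b_2$ (resp.\ $a_2,b_1$) to be roots of unity and thereby guarantees unipotence of each $\varphi$; once that is in hand, the remaining steps are a formal application of Kolchin's theorem and of Theorem \ref{Glavna}, the only point requiring care being that unipotence of a normalized representative legitimately descends to the quotient ${\rm Out}(Q(1E)^r)$.
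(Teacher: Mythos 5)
Your proposal is correct and follows essentially the same route as the paper: the paper's proof likewise consists of citing the preceding computation to conclude that ${\rm Out}(Q(1E)^r)$ is unipotent and then invoking Theorem \ref{Glavna}. You merely make explicit the steps the paper leaves implicit (element-wise unipotence upgrading to group unipotence via Kolchin's theorem, descent of unipotence from ${\rm Aut}$ to the quotient ${\rm Out}$, and verification of the hypotheses of Theorem \ref{Glavna}), which is a faithful elaboration rather than a different argument.
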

\begin{proof} 
From the above computation it follows that ${\rm Out}(Q(1E)^r)$ is unipotent. By Theorem \ref{Glavna}, there are no non-trivial gradings on $Q(1E)^r$. 
\end{proof}

\begin{prop}
Let $B$ be a tame block of group algebras which is Morita equivalent to $Q(1E)^r$.
The group ${\rm Out}(B)$ is a unipotent group. 
\end{prop}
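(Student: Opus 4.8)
The plan is to reduce the statement for the possibly non-basic block $B$ to the result just established for the basic algebra $Q(1E)^r$. Since $B$ is a block with a single isomorphism class of simple modules $S$, its basic algebra is $A_0:=Q(1E)^r$, and in fact $B\cong M_n(A_0)$ with $n=\dim_k S$: the algebra $A_0$ is local (being basic with one simple module), so its only indecomposable projective is $A_0$ itself, the progenerator realizing the Morita equivalence is $A_0^{\oplus n}$, and ${\rm End}_{A_0}(A_0^{\oplus n})^{\rm op}\cong M_n(A_0)$. It therefore suffices to show that ${\rm Out}(M_n(A_0))$ consists of unipotent elements, knowing by the preceding theorem that ${\rm Out}(A_0)$ does.

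The key step is to produce a surjection ${\rm Out}(A_0)\twoheadrightarrow {\rm Out}(M_n(A_0))$ that carries unipotent classes to unipotent classes. Applying an automorphism of $A_0$ to each entry defines a homomorphism ${\rm Aut}(A_0)\to {\rm Aut}(M_n(A_0))$; since an inner automorphism of $A_0$ by a unit $u$ is sent to conjugation by ${\rm diag}(u,\dots,u)$, it descends to a homomorphism $\rho$ on outer automorphism groups. To see that $\rho$ is onto, take $\varphi\in {\rm Aut}(M_n(A_0))$. The idempotents $\varphi(e_{ii})$ again form a complete set of primitive orthogonal idempotents (the $e_{ii}$ are primitive because $e_{ii}M_n(A_0)e_{ii}\cong A_0$ is local), so by the Skolem--Noether-type argument already used above, namely \cite[Theorem 3.10.2]{Jac}, after composing $\varphi$ with a suitable inner automorphism we may assume that $\varphi$ fixes every matrix unit $e_{ij}$. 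Such a $\varphi$ satisfies $\varphi(e_{i1}ae_{1j})=e_{i1}\bar\varphi(a)e_{1j}$ for $a\in e_{11}M_n(A_0)e_{11}\cong A_0$, that is, it is the entrywise application of $\bar\varphi\in {\rm Aut}(A_0)$, so its class equals $\rho$ of the class of $\bar\varphi$.

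The conclusion is then immediate. If the class of $\bar\varphi$ in ${\rm Out}(A_0)$ is unipotent, choose an inner automorphism $\iota$ of $A_0$ with $\iota\bar\varphi$ unipotent; applying $\iota\bar\varphi$ entrywise produces a unipotent automorphism of $M_n(A_0)$ whose class is $\rho$ of the class of $\bar\varphi$. Hence every class in ${\rm Out}(M_n(A_0))={\rm Out}(B)$ is unipotent, and ${\rm Out}(B)$ is a unipotent group.

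I expect the main obstacle to be making the normalization rigorous: after the initial conjugation fixing the $e_{ii}$, one must check that a further inner automorphism can be chosen to fix all the off-diagonal matrix units simultaneously, so that $\varphi$ is genuinely entrywise. Should this bookkeeping become awkward, a more conceptual route is available through the Picard group, which is a Morita invariant whose identity component agrees with ${\rm Out}(A)^0$ (see \cite{Rou, Sao} and Remark \ref{rimark}); this yields ${\rm Out}(B)^0={\rm Pic}(B)^0={\rm Pic}(A_0)^0={\rm Out}(A_0)^0$ unipotent, which already shows ${\rm Out}(B)$ has trivial maximal tori and hence, by Theorem \ref{Glavna}, forbids non-trivial gradings, recovering the full statement when combined with the entrywise description above.
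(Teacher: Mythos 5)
Your proposal is correct, but it follows a genuinely different route from the paper. The paper's proof is a short transfer argument: by \cite[Theorem 17]{Sao}, ${\rm Out}^0(-)$ is invariant under derived and stable equivalences, hence under Morita equivalence, so triviality of the maximal tori of ${\rm Out}^0(Q(1E)^r)$ forces triviality of those of ${\rm Out}^0(B)$, whence unipotence of ${\rm Out}(B)$; your fallback through ${\rm Pic}^0$ and Remark \ref{rimark} is essentially this same argument. Your main route instead makes the Morita equivalence explicit, $B\cong M_n(A_0)$ with $A_0=Q(1E)^r$ local, and pushes unipotence forward along the entrywise morphism $\rho\colon {\rm Out}(A_0)\rightarrow {\rm Out}(M_n(A_0))$. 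The bookkeeping you flag is not a real obstacle: once $\varphi$ fixes each $e_{ii}$, put $u_{ij}=\varphi(e_{ij})\in e_{ii}Be_{jj}$ and conjugate by $x=\sum_k u_{k1}e_{1k}$, which is invertible with inverse $\sum_k e_{k1}u_{1k}$; then $x^{-1}u_{ij}x=e_{i1}u_{1i}u_{ij}u_{j1}e_{1j}=e_{i1}u_{11}e_{1j}=e_{ij}$, so after one further inner automorphism $\varphi$ fixes all matrix units and is therefore entrywise, proving surjectivity of $\rho$. (One point worth making explicit: a unipotent class in ${\rm Out}(A_0)$ does admit a unipotent representative in ${\rm Aut}(A_0)$, because the semisimple part of any representative lies in ${\rm Inn}(A_0)$ by compatibility of Jordan decomposition with quotient morphisms; alternatively, the computation in Section 3 of the paper already exhibits unipotent representatives directly.) As for what each approach buys: the paper's argument is shorter and its key invariance theorem applies far beyond Morita equivalence, but it only controls the identity component, and its closing step --- that trivial maximal tori in ${\rm Out}^0(B)$ force all of ${\rm Out}(B)$ to be unipotent --- is delicate, since a priori other components could contain semisimple elements of odd order; your surjection argument is self-contained (needing only the conjugacy theorem \cite[Theorem 3.10.2]{Jac} already used in the paper), and it yields the cleaner, stronger conclusion that every element of ${\rm Out}(B)$ is the image of a unipotent element under a morphism of algebraic groups, hence unipotent.
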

\begin{proof}
By \cite[Theorem 17]{Sao}, ${\rm Out}^0(B)$ is invariant under derived and stable 
equivalence, hence,  it is invariant under Morita equivalence. Since the maximal tori in ${\rm Out}^0(Q(1E)^r)$ are trivial, it must be that the maximal tori in ${\rm Out}^0(B) $ are also trivial,  which implies that ${\rm Out}(B)$ is a unipotent group.
\end{proof}

\begin{cor} Let $B$ be a tame block of group algebras with quaternion defect groups and one isomorphism class of simple modules.  Up to graded Morita equivalence, there does not exist a non-trivial grading on  $B$. 
\end{cor}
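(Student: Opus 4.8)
The plan is to reduce everything to the unipotence of ${\rm Out}(B)$, which has just been established, and then to invoke the Picard-group form of the graded Morita classification recorded in Remark \ref{rimark}. First I would note the standing reductions: by Erdmann's classification \cite{Er2}, such a block $B$ is Morita equivalent to $Q(1E)^r$ with $r = 2^{n-2}$, and, being a block of a finite group algebra, $B$ is a connected finite dimensional $k$-algebra. The preceding proposition, which transports ${\rm Out}^0$ along the Morita equivalence using the invariance result \cite[Theorem 17]{Sao}, already tells us that ${\rm Out}(B)$ is unipotent.

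The key observation is that one may not apply Theorem \ref{Glavna} to $B$ directly, because $B$ need not be basic: its unique simple module may have dimension greater than $1$, in which case the quiver of $B$ has a single vertex only after passing to the basic algebra $Q(1E)^r$. This is precisely the situation covered by Remark \ref{rimark}. Since ${\rm Out}(B)$ is unipotent it contains no non-trivial torus, so every cocharacter $\pi\colon \textbf{G}_m \to {\rm Out}(B)$ is trivial. Consequently, for any two gradings on $B$ the associated cocharacters of ${\rm Out}(B)$ both equal the trivial cocharacter, and hence are conjugate in ${\rm Pic}(B)$. By the unbased version of Proposition \ref{cokarak} stated in Remark \ref{rimark}, any two gradings on $B$ are then graded Morita equivalent; in particular every grading on $B$ is graded Morita equivalent to the trivial grading, which is the assertion.

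I expect the only real subtlety to be the bookkeeping of where the basic hypothesis is used. For $Q(1E)^r$ itself the conclusion is immediate from Theorem \ref{Glavna}, but for the block $B$ one must argue through conjugacy in ${\rm Pic}(B)$ rather than in ${\rm Out}(B)$, since $B$ is only Morita equivalent to, and not necessarily equal to, a basic algebra with a one-vertex quiver. Once the unipotence of ${\rm Out}(B)$ is granted, no further computation is needed; the proof is a direct appeal to Remark \ref{rimark}.
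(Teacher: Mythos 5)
Your proposal is correct and follows essentially the same route as the paper: unipotence of ${\rm Out}(B)$ from the preceding proposition, combined with the ${\rm Pic}(B)$-version of the cocharacter classification in Remark \ref{rimark}, since $B$ need not be basic. Your explicit remark that Theorem \ref{Glavna} cannot be applied directly to $B$ matches the caveat the paper itself records immediately after the corollary.
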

\begin{proof} From the previous proposition, Theorem \ref{Glavna} and Remark \ref{rimark} it follows that every grading on $B$ is graded Morita equivalent to the trivial grading on $B$. 
\end{proof}

We note here that we can not claim that $B$ does not have non-trivial gradings, even though $Q(1E)^r$ does not have non-trivial gradings. The key difference is that $B$ does not have to be a basic algebra.

\end{document}